\theoremstyle{change}%
\newtheorem{definition}{Definition:}[section]%
\newtheorem{proposition}[definition]{Proposition:}%
\newtheorem{theorem}[definition]{Theorem:}%
\newtheorem{lemma}[definition]{Lemma:}%
{\theorembodyfont{\rmfamily}\newtheorem{remark}[definition]{Remark:}}%
{\theorembodyfont{\rmfamily}}%
\newenvironment{proof}
{{\bf Proof:}}
{\qquad \hspace*{\fill} $\Box$}%
\newcommand{\tr}{\operatorname{tr}}%
\newcommand{\inner}{\operatorname{int}}%
\newcommand{\rme}{\mathrm{e}}%
\newcommand{\CC}{\mathcal{C}}%
\newcommand{\OC}{\mathcal{O}}%
\newcommand{\UC}{\mathcal{U}}%
\newcommand{\N}{\mathbb{N}}%
\newcommand{\R}{\mathbb{R}}%
\newcommand{\Z}{\mathbb{Z}}%
\begin{document}

\title{Control sets of linear control systems on $\R^2$. The complex case.}
\author{V\'{\i}ctor Ayala \thanks{ Supported by Proyecto Fondecyt $n^{o}$ 1190142,
Conicyt, Chile} \\Universidad de Tarapac\'a\\Instituto de Alta Investigaci\'on\\Casilla 7D, Arica, Chile
\\and\\Adriano Da Silva and Erik Mamani \\Departamento de Matem\'atica,\\Universidad de Tarapac\'a - Arica, Chile.
}
\date{\today }
\maketitle

\begin{abstract}
  This paper explicitly computes the unique control set $D$ with non-empty interior of a linear control system on $\R^2$, when the associated matrix has complex eigenvalues. It turns out that the closure of $D$ coincides with the the region delimited by a computable periodic orbit $\OC$ of the system.
\end{abstract}

\section{Introduction}
Let A be a real matrix of order two. A linear control system (LCS) on $\R^2$ is given by the family of ODEs
	\begin{flalign*}
		&&\dot{v} = Av+u\eta, \;\;\;\;u\in\Omega,&&\hspace{-1cm}\left(\Sigma_{\R^2}\right)
	\end{flalign*}
where $\Omega=[u^-, u^+]$ with $u^-<u^+$ and $\eta\neq 0$. 

This article explicitly describes a maximal region $D$ of the system in which interior the controllability property holds. This region, called a control set, is relevant in applications. In fact, two arbitrary states in its interior can be connected by an integral curve of the system in positive time. In particular, by following an appropriate trajectory, it is possible to transform an initial condition into the desired state through the system in a finite time. Additionally, the existence of an optimal solution is also a warranty for a minimum time problem between these states.

Due to the exciting mathematical theory involved \cite{Agrachev}, \cite{Jurdjevic}, \cite{Kalman}, \cite{Pontryagin}, \cite{Wonham}; and the number of relevant applications \cite{Axelby}, \cite{Aseev}, \cite{Ledzewick}, \cite{Leitmann}, \cite{Reeds}, linear and non-linear control systems have been developed for more than 70 years. However, there is no literature for an arbitrary matrix $A$ for this particular system. 

Our approach is novel, and here we consider the drift $A$ with a couple of complex eigenvalues. We describe the corresponding control set by the different possibilities of $A$'s trace. And, we prove that $D$ is limited by a specific periodic orbit $\OC$ of the system. 

For a linear control system, it is well known that the Kalman rank condition warrants the existence of a control set $D$ with a non-empty interior. Furthermore, $D$ is characterized by the positive and negative orbits, which allows for determining some topological properties of $D$. However, computing these orbits is a difficult task, and the same is true for $D$. 

This article's main contribution through our approach, permits us to recover all the known results about the controllability and control sets properties for this class of systems without the extra assumptions $0\in\inner\Omega$. Moreover, the most crucial issue is to compute the control set $D$ explicitly as follow. The set,
$$\OC:=\left\{\varphi(s, P^+, u^-), s\in \left[0, \frac{\pi}{\mu}\right]\right\}\cup \left\{\varphi(s, P^-, u^+), s\in \left[0, \frac{\pi}{\mu}\right]\right\},$$
is a periodic orbit of $\Sigma_{\R^2}$.
Here, the points $P^-, P^+$ belong to $\R \cdot A^{-1}\eta$, a line determined by the drift and the control vector of the system. 
This orbit is obtained asymptotically by considering a solution starting on an equilibrium whose control function interchanges from $u^-$ and $u^+$. With that there are three possibilities: $\tr A=0$ and the system is controllable; $\tr A<0$ and $D$ is unique, closed and its boundary is $\OC$, or $\tr A>0$ and $D$ is open and coincides with the region bounded by $\OC$. Moreover, $\Sigma_{\R^2}$ admits only $D$ as control set when $\tr A<0$ and admits $D$ and $\OC$ as control sets when $\tr A>0$.

The article concludes with an asymptotic analysis through the parameters determining the dynamic of the system, i.e., the eigenvalues, and the size of the range determined by the controls $u^-$ and $u^+$. In particular, controllability properties are recovered in some cases. We also mention that our method does not consider $0$ in the range's interior as usual.

\bigskip

{\bf Notations:} For any vector $v\in\R^2$ we denote by $\R\cdot v$ the line passing by the origin and parallel to $v$. We consider the natural order on $\R\cdot v$ as 
$$v_1, v_2\in\R\cdot v, \;\;\; v_1\leq v_2\;\;\;\iff\;\;\; v_1-v_2=\alpha v, \;\;\alpha>0.$$
For any $\tau\in\R$, we denote by $R_{\tau}$ the rotation of $\tau$-degrees which is clockwise if $\tau<0$ and counter-clockwise if $\tau>0$. In particular, we use define $\theta:=R_{\pi/2}$.

\section{Geometric properties of spirals in $\R^2$.}

This section analyzes the dynamics of spirals in the Euclidean space $\R^2$. In particular, we show that spirals with the center in the same line have a particular kind of invariance.

Let $A\in \mathfrak{gl}(2, \R)$ and denote by $\sigma_A$ the number 
$$\sigma_A:=(\tr A)^2-4\det A.$$
The number $\sigma_A$ is related to the eigenvalues of $A$, and it is straightforward to see that $A$ has a pair of complex eigenvalues if and only if $\sigma_A<0$.  

\begin{definition}
For any $A\in\mathfrak{gl}(2, \R)$ with $\sigma_A<0$ we define the spiral $\varphi_A$ to be the function
$$(\tau, v_1, v_2)\in\R\times(\R^2\times\R^2)\setminus\Delta\mapsto \varphi_A(\tau, v_1, v_2):=\rme^{\tau A}(v_1-v_2)+v_2,$$
where $\Delta\subset \R^2\times\R^2$ is the diagonal. 
\end{definition}

Since $\sigma_A<0$, there exists an orthonormal basis of $\R^2$ such that 
$$A=\left(\begin{array}{cc}
  \lambda  &  -\mu \\
    \mu & \lambda
\end{array}\right), \;\;\;\;\mbox{ where }\;\;\;2\lambda=\tr A\;\;\;\mbox{ and }\;\;\;\mu^2=|\sigma_A|.$$
Consequently, the spiral $\varphi_A$ can be written on such basis, as
$$\varphi_A(\tau, v_1, v_2)=\rme^{\tau\lambda}R_{\tau\mu}(v_1-v_2)+v_2,$$
where $R_{\mu\tau}$ is the rotation of $\mu\tau$-degrees with relation to the previous basis, which is clockwise if $\mu\tau<0$ and counter-clockwise if $\mu\tau>0$.

The spiral $\varphi_A$ intersects the line passing by $v_1$ and $v_2$ for any $\tau\in k\frac{\pi}{\mu}\Z$. Moreover,
$$|\varphi_A(\tau, v_1, v_2)-v_2|=\rme^{s\lambda}|v_1-v_2|,$$
showing that $\varphi_A(\tau, v_1, v_2)$ belongs to the circumference with center $v_2$ and radius $\rme^{\tau\lambda}|v_1-v_2|$. In particular, 
$$\varphi_A(\tau, v_1, v_2)\rightarrow v_2\;\;\;\;\mbox{ when }\;\;\;\;\tau\lambda\rightarrow-\infty.$$

\begin{remark}
Note that, by reverting the time, we can relate the spirals associated with $\lambda$ and $-\lambda$. Also, if $B:\R^2\rightarrow\R^2$ is the linear map whose matrix on the previous basis is 
$$B=\left(\begin{array}{cc}
  1  &  0 \\
    0 & -1
\end{array}\right)\;\;\;\;\mbox{ then }\;\;\;\;B^2=I_{\R^2}\;\;\mbox{ and }\;\;BAB=\left(\begin{array}{cc}
  \lambda &  \mu \\
    -\mu & \lambda
\end{array}\right),$$
implying that the spirals associated with $\mu$ and $-\mu$ are related by conjugation.
\end{remark}

By the previous Remark let us assume w.l.o.g. that $\lambda<0$ and $\mu>0$ and consider $v_1, v_2\in(\R^2\times\R^2)\setminus\Delta$. Denote by $\CC_A(v_1, v_2)$, the region (see Figure \ref{fig1}) delimited by the line passing through $v_1$ and $v_2$, and the curve
$$
\left\{\varphi_A(\tau, v_1, v_2) \;\;\; \tau\in \left[0, \frac{\pi}{\mu}\right]\right\}.
$$

\begin{figure}[h!]
	\centering
	\includegraphics[scale=.6]{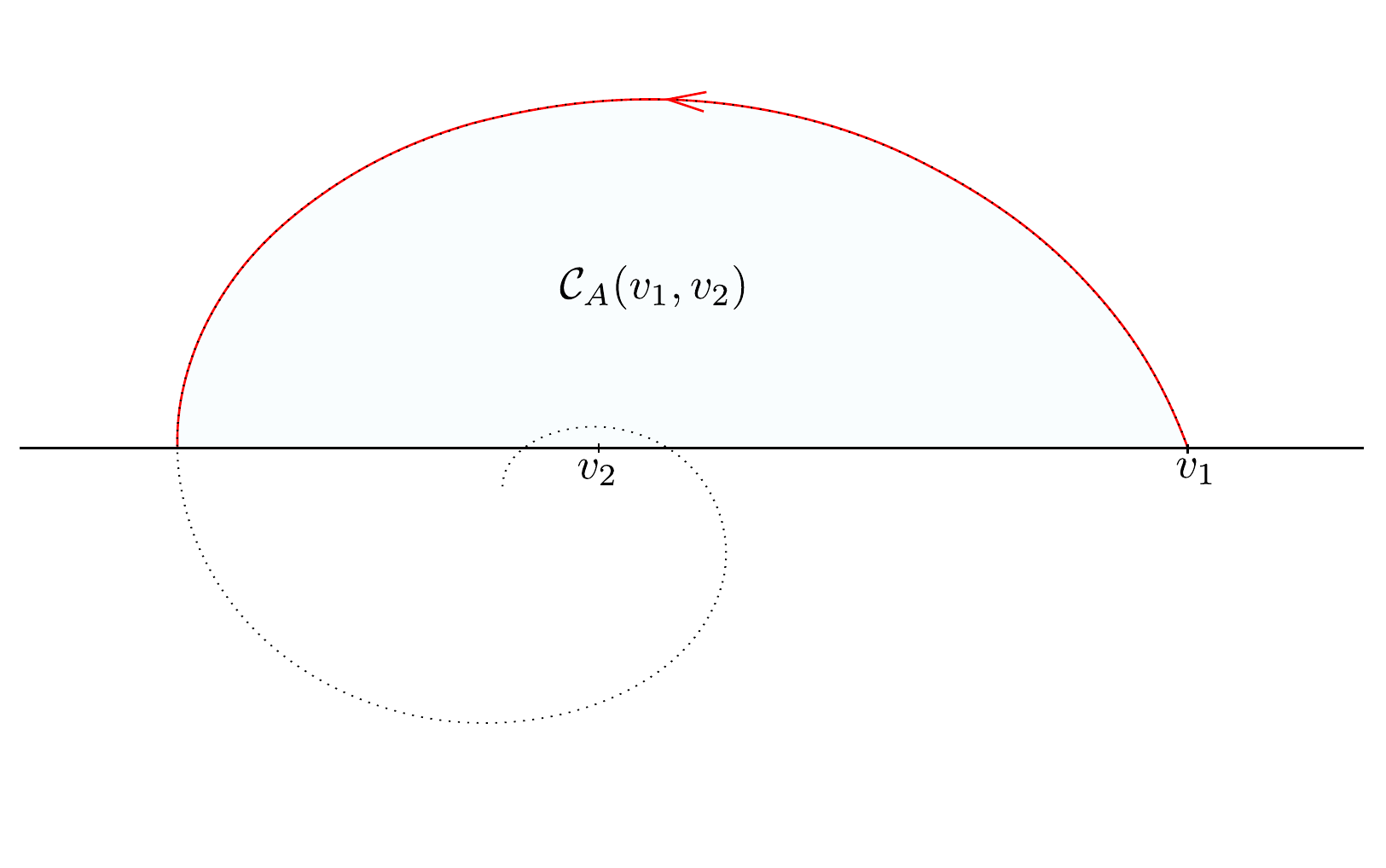}
	\caption{The region $\CC_A(v_1, v_2)$}
	\label{fig1}
\end{figure}

By our choices, such a region can be algebraically described as  
$$
\CC_A(v_1, v_2)=\left\{v\in\R^2; \;\;\langle v-v_2, \theta (v_1-v_2)\rangle\geq 0\;\;\mbox{ and }\;\;\langle v-\varphi_A(\tau, v_1, v_2), \theta A\rme^{\tau A}(v_1-v_2)\rangle\geq 0, \;\forall\tau\in\left[0, \frac{\pi}{\mu}\right]\right\},
$$
where $\theta$ is the counter-clockwise rotation of $\pi/2$-degrees.  The next result analyzes a kind of invariance for the region $\CC_A(v_1, v_2)$.

\begin{proposition}
\label{invariance}
For any $w_2\in [v_1, v_2]$ and $w_1\in \CC_A(v_1, v_2)$, it holds that 
$$\varphi_A(s, w_1, w_2)\in\CC_A(v_1, v_2), \;\;s\in \left[0, \frac{\pi-\sigma}{\mu}\right],$$
where $\sigma\in [0, \pi]$ is the angle between $v_1-v_2$ and $w_1-w_2$. Here, we are assuming that $\lambda<0$ and $\mu>0$.
\end{proposition}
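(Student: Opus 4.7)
I plan to verify directly that the trajectory
\[
\psi(s) := \varphi_A(s, w_1, w_2) = w_2 + e^{s\lambda}R_{s\mu}(w_1 - w_2)
\]
satisfies both algebraic conditions defining $\CC_A(v_1, v_2)$ throughout $s \in [0, (\pi - \sigma)/\mu]$.

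After translating so $v_2 = 0$ and rotating so $v_1 - v_2$ lies along the positive first axis, the hypothesis $w_2 \in [v_1, v_2]$ gives $w_2 - v_2 = c(v_1 - v_2)$ with $c \in [0, 1]$. Since $w_2 - v_2$ is parallel to $v_1 - v_2$, only the rotating part of $\psi(s)$ contributes to the first condition, and a direct expansion yields
\[
\langle \psi(s) - v_2,\, \theta(v_1 - v_2)\rangle \;=\; |w_1 - w_2|\cdot|v_1 - v_2|\cdot e^{s\lambda}\sin(\sigma + s\mu),
\]
which is $\geq 0$ on $[0, (\pi - \sigma)/\mu]$ (since $\sigma + s\mu \in [\sigma, \pi]$) and strictly positive on the open interval. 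In particular $\psi(s)$ lies strictly above the line $\R\cdot(v_1 - v_2) + v_2$ for $s \in (0, (\pi - \sigma)/\mu)$.

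For the family of inequalities indexed by $\tau \in [0, \pi/\mu]$ I will use a first-exit/barrier argument. Assume toward contradiction that $s_1 \in (0, (\pi - \sigma)/\mu)$ is a first exit time of $\psi$ from $\CC_A(v_1, v_2)$. The strict upper-half-plane property above forces $\psi(s_1)$ onto the spiral-arc portion of $\partial\CC_A(v_1, v_2)$, say $\psi(s_1) = \varphi_A(\tau_0, v_1, v_2)$ with $\tau_0 \in (0, \pi/\mu)$. Writing
\[
\dot\psi(s_1) = A(\psi(s_1) - w_2) = A e^{\tau_0 A}(v_1 - v_2) - c\, A(v_1 - v_2),
\]
applying $\langle u, \theta u\rangle = 0$ with $u = A e^{\tau_0 A}(v_1 - v_2)$ to kill the first summand, and using the polar decomposition $A = \rho_A R_\phi$ with $\rho_A = \sqrt{\lambda^2 + \mu^2}$, a short trigonometric calculation gives
\[
\langle \dot\psi(s_1),\, \theta A e^{\tau_0 A}(v_1 - v_2)\rangle \;=\; c\,\rho_A^2\,e^{\tau_0\lambda}\,|v_1 - v_2|^2 \sin(\tau_0\mu).
\]
For $c > 0$ and $\tau_0 \in (0, \pi/\mu)$ this is strictly positive, so $\psi$ points strictly into $\CC_A(v_1, v_2)$ at $\psi(s_1)$, contradicting an outward transversal exit.

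The remaining case $c = 0$ (i.e.\ $w_2 = v_2$) is immediate: $\psi$ is then a spiral concentric with the boundary spiral of $\CC_A(v_1, v_2)$, and $w_1 \in \CC_A(v_1, v_2)$ is exactly the statement that the initial radius $|w_1 - v_2|$ lies below the boundary radial profile $r = |v_1 - v_2|\,e^{\theta\lambda/\mu}$, so the two nested spirals do not cross and $\psi(s) \in \CC_A(v_1, v_2)$ on the whole interval (with $\sigma = \alpha$, the initial angle). Closedness of $\CC_A(v_1, v_2)$ (an intersection of closed half-planes) extends containment to the endpoint $s = (\pi - \sigma)/\mu$. The main obstacle is the signed computation of the inward-normal component: the cancellation $\langle u, \theta u\rangle = 0$ discards the dominant ``tangent'' term $A e^{\tau_0 A}(v_1 - v_2)$, and the sign of the surviving piece is controlled precisely by $c \geq 0$, which is the geometric content of the hypothesis $w_2 \in [v_1, v_2]$.
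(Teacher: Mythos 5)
Your argument is correct in substance but takes a genuinely different route from the paper. The paper also reduces to $v_2=0$ and verifies the half-plane condition by the same $\sin(\sigma+s\mu)$ computation, but for the family of spiral constraints it introduces the two-variable function $g(s,\tau)=\langle \varphi_A(s,w_1,w_2)-\rme^{\tau A}v_1,\theta A\rme^{\tau A}v_1\rangle$ on the compact rectangle $\left[0,\frac{\pi-\sigma}{\mu}\right]\times\left[0,\frac{\pi}{\mu}\right]$ and shows $g\geq 0$ by locating its global minimum (interior critical points via $\partial g/\partial s=\partial g/\partial\tau=0$, then the four edges of the rectangle, then Weierstrass). You instead run a first-contact barrier argument: at the first time $s_1$ the trajectory meets the spiral part of the boundary, say at $\rme^{\tau_0 A}v_1$, the inward-normal component of the velocity is $c\,\rho_A^2\rme^{\tau_0\lambda}|v_1|^2\sin(\tau_0\mu)>0$. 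I checked this formula; it is exactly the paper's $\frac{\partial g}{\partial s}(s_1,\tau_0)$ simplified using the contact relation, so the two proofs hinge on the same quantity, but yours avoids the critical-point classification and the delicate edge analysis of Step 2 entirely, at the price of invoking the geometric fact that $\partial\CC_A(v_1,v_2)$ is exactly the segment union the arc (the paper assumes the equivalence of its geometric and algebraic descriptions of $\CC_A$ without proof, so you are at par there). Two spots to tighten: the contradiction is cleaner if you note that $h_{\tau_0}(s):=\langle\psi(s)-\rme^{\tau_0 A}v_1,\theta A\rme^{\tau_0 A}v_1\rangle$ is $\geq 0$ on $[0,s_1]$ and vanishes at $s_1$, forcing its left derivative to be $\leq 0$ against your computed $>0$ --- this avoids any discussion of whether the exit is ``transversal'' and of how $\CC_A$ sits inside the tangent half-plane near the contact point (where the arc is tangent to $\partial H_{\tau_0}$, so ``entering $H_{\tau_0}$'' does not by itself mean ``staying in $\CC_A$''); and the degenerate case $s_1=0$, i.e.\ $w_1\in\partial\CC_A(v_1,v_2)$ with $c>0$, is not covered by that inequality on a one-point interval --- handle it by first proving the statement for $w_1\in\inner\CC_A(v_1,v_2)$ and passing to the limit using that $\CC_A(v_1,v_2)$ is closed and $\varphi_A$ is continuous in $w_1$.
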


\begin{proof}
Since,
$$\varphi_A(\tau, v_1, v_2)=\varphi_A(\tau, v_1-v_2, 0)+v_2,$$
the region $\CC_A(v_1, v_2)$ is obtained from $\CC_A(v_1-v_2, 0)$ through a translation by $v_2$. Therefore, it is enough to show the result assuming that $v_2=0$. For this case, we have that 
$$
\CC_A(v_1, 0)=\left\{v\in\R^2; \;\;\langle v, \theta v_1\rangle\geq 0\;\;\mbox{ and }\;\;\langle v-\rme^{\tau\lambda}R_{\tau\mu} v_1, \theta A\rme^{\tau A}v_1\rangle\geq 0, \;\forall\tau\in\left[0, \frac{\pi}{\mu}\right]\right\}.
$$
Moreover, in this case $w_2\in (0, v_1)$ and $\sigma$ is the angle between $v_1$ and $w_1-w_2$. Thus, we already have that,
$$\langle \varphi_A(s, w_1, w_2), \theta v_1\rangle=\langle \rme^{s A}(w_1-w_2)+w_2, \theta v_1\rangle\stackrel{w_2\in(0, v_2)}{=}\rme^{s\lambda}\langle R_{s\mu}(w_1-w_2), \theta v_1\rangle$$
$$=\rme^{s\lambda}\frac{|w_1-w_2|}{|v_1|}\langle R_{\mu s+\sigma} v_1, \theta v_1\rangle=\rme^{s\lambda}|w_1-w_2||v_1|\sin(\mu s+\sigma)\geq 0, \;\;\;\mbox{ since }\;\;\; \mu s\in [0, \pi-\sigma],$$
showing that 
$$\langle w_1, \theta v_1\rangle\geq 0\;\;\implies\;\;\langle \varphi_A(s, w_1, w_2), \theta v_1\rangle\geq 0\;\;\forall s\in \left[0, \frac{\pi-\sigma}{\mu}\right].$$

Define now the function
$$g:C_{\sigma}\rightarrow\R, \;\;\;\;\;\;\;g(s, \tau):=\langle \varphi_A(s, w_1, w_2)-\rme^{\tau A}v_1, \theta A\rme^{\tau A}v_1\rangle,$$
where $C_{\sigma}:=\left[0, \frac{\pi-\sigma}{\mu}\right]\times \left[0, \frac{\pi}{\mu}\right]$.
In order to conclude the result, it is enough to show that $g$ is nonnegative, that is,  
$$\forall (s, \tau)\in C_{\sigma}, \;\;\;\; g(s, \tau)\geq 0,$$
which we will do in the next steps.

\begin{itemize}
    \item[\bf Step 1.:] $g$ is nonnegative on critical points in $\inner C_{\sigma}$;
    
    By simple calculations, we get that 
    $$\frac{\partial g}{\partial s}(s, \tau)=\langle A\rme^{s A}(w_1-w_2), \theta A\rme^{\tau A}v_1\rangle=\det A\,\rme^{(s+\tau)\lambda}\frac{|w_1-w_2|}{|v_1|}\langle R_{\mu s+\sigma} v_1, \theta R_{\mu\tau} v_1\rangle$$
    $$=\det A\,\rme^{(s+\tau)\lambda}\frac{|w_1-w_2|}{|v_1|}\sin(\mu(s-\tau)+\sigma)=0\;\;\;\stackrel{(s, \tau)\in \inner D_{\sigma}}{\iff}\;\;\; \mu s+\sigma=\mu\tau.$$
    
    Also, 
    $$0=\frac{\partial g}{\partial \tau}(s, \tau)=\langle \varphi_A(s, w_1, w_2)-\rme^{\tau A}v_1, \theta A^2\rme^{\tau A}v_1\rangle,$$
    if and only if, there exists $\gamma\in\R$ such that,
    $$\varphi_A(s, w_1, w_2)-\rme^{\tau A}v_1=\gamma A^2\rme^{\tau A}v_1.$$
    The relation $\mu s+\sigma=\mu\tau$ gives us that
    $$\varphi_A(s, w_1, w_2)=\rme^{s A}(w_1-w_2)+w_2=\rme^{\frac{-\sigma}{\mu}\lambda}\rme^{\tau A}R_{-\sigma}(w_1-w_2)+w_2=\rme^{\frac{-\sigma}{\mu}\lambda}\frac{|w_1-w_2|}{|v_1|}\rme^{\tau A}v_1+w_2,$$
    and hence 
    $$\langle \varphi_A(s, w_1, w_2)-\rme^{\tau A}v_1, \theta\rme^{\tau A}v_1\rangle=\langle w_2, \theta\rme^{\tau A}v_1\rangle=-\rme^{\tau\lambda}|w_2||v_1|\sin\mu\tau.$$
    On the other hand, 
    $$\langle A^2\rme^{\tau A}v_1, \theta\rme^{\tau A}v_1\rangle=2\lambda\mu\rme^{2\tau\lambda}|v_1|^2,$$
    implying that 
    $$\gamma=-\frac{-\rme^{-\tau\lambda}}{2\lambda\mu}\frac{|w_2|}{|v_1|}\sin\mu\tau\geq 0, \;\;\;\mbox{ since }\;\lambda<0.$$

    In particular, if $g$ admits a critical point $(s, \tau)\in\inner D_{\sigma}$, by the previous arguments, we get 
    $$g(s, \tau)=\langle \varphi_A(s, w_1, w_2)-\rme^{\tau A}v_1, \theta A\rme^{\tau A}v_1\rangle=\gamma\langle A^2\rme^{\tau A}v_1, \theta A\rme^{\tau A}v_1\rangle$$
    $$=\gamma\det A\,\rme^{2\tau\lambda}\langle Av_1, \theta v_1\rangle=\gamma\det A\,\rme^{2\tau\lambda}\mu|v_1|^2\geq 0,$$
    showing the assertion.
    
    \item[\bf Step. 2:] $g$ is nonnegative on $\partial C_{\sigma}$.
    
    Let us start by noticing the point $\varphi_A\left(\frac{\pi-\sigma}{\mu}, w_1, w_2\right)$ belongs to the line $\R v_1$ and that
    $$\varphi_A\left(\frac{\pi-\sigma}{\mu}, w_1, w_2\right)=\rme^{\frac{\pi-\sigma}{\mu}A}(w_1-w_2)+w_2=-\rme^{\frac{\pi-\sigma}{\mu}\lambda}R_{-\sigma}(w_1-w_2)+w_2$$
    $$=-\rme^{\frac{\pi-\sigma}{\mu}\lambda}\frac{|w_1-w_2|}{|v_1|}v_1+w_2=\left(1-\rme^{\frac{\pi-\sigma}{\mu}\lambda}\frac{|w_1-w_2|}{|w_2|}\right)w_2,$$
    showing that $\varphi_A\left(\frac{\pi-\sigma}{\mu}, w_1, w_2\right)\leq w_2\leq v_1$. On the other hand, if $\beta\in [0, \pi]$ is the angle between $v_1$ and $w_1$ we obtain that $\beta<\sigma$ (see Figure \ref{fig2}) and 
    $$\rme^{\frac{\beta}{\mu}A}v_1=w_1\;\;\;\implies \;\;\;\rme^{\frac{\pi}{\mu}\lambda}|v_1|=\rme^{\frac{\pi-\beta}{\mu}\lambda}\left|\rme^{\frac{\beta}{\mu}A}v_1\right|= \rme^{\frac{\pi-\beta}{\mu}\lambda}|w_1|.$$
    Consequently, 
    $$|w_2|-\rme^{\frac{\pi-\sigma}{\mu}\lambda}|w_1-w_2|+\rme^{\frac{\pi}{\mu}\lambda}|v_1|= |w_2|-\rme^{\frac{\pi-\sigma}{\mu}\lambda}|w_1-w_2|+\rme^{\frac{\pi-\beta}{\mu}\lambda}|w_1|$$
    $$=\rme^{\frac{\pi-\beta}{\mu}\lambda}\left(|w_1|+\rme^{-\frac{\pi-\beta}{\mu}\lambda}|w_2|-\rme^{\frac{-(\sigma-\beta)}{\mu}\lambda}|w_1-w_2|\right)\geq \rme^{\frac{\pi-\beta}{\mu}\lambda}\left(|w_1|+|w_2|-|w_1-w_2|\right)\geq 0,$$
    implying that 
    $$\varphi_A\left(\frac{\pi-\sigma}{\mu}, w_1, w_2\right)-\rme^{\frac{\pi}{\mu}A}v_1=\left(|w_2|-\rme^{\frac{\pi-\sigma}{\mu}\lambda}|w_1-w_2|+\rme^{\frac{\pi}{\mu}\lambda}|v_1|\right)\frac{v_1}{|v_1|}\geq 0,$$
    and allowing us to conclude that 
    $$\rme^{\frac{\pi}{\mu}A}v_1\leq \varphi_A\left(\frac{\pi-\sigma}{\mu}, w_1, w_2\right)\leq v_1\;\;\;\implies\;\;\; \varphi_A\left(\frac{\pi-\sigma}{\mu}, w_1, w_2\right)\in\CC_A(v_1, 0).$$
    Therefore, 
    $$\forall \tau\in\left[0, \frac{\pi}{\mu}\right], \;\;\;g(0, \tau)\geq 0\;\;\;\mbox{ and }\;\;\;g\left(\frac{\pi-\sigma}{\mu}, \tau\right)\geq 0.$$
    
    By the previous calculations, 
    $$\frac{\partial g}{\partial s}(s, \tau)=\det A \,\rme^{(s+\tau)\lambda}\frac{|w_1-w_2|}{|v_1|}\sin(\mu(s-\tau)+\sigma),$$
    implying that, 
    $$\forall s\in \left(0, \frac{\pi-\sigma}{\mu}\right),\;\;\;\;\;\frac{\partial g}{\partial s}(s, 0)>0\;\;\;\mbox{ and }\;\;\;\frac{\partial g}{\partial s}\left(s, \frac{\pi}{\mu}\right)<0.$$
    As a consequence, it follows that 
    $$\forall s\in \left(0, \frac{\pi-\sigma}{\mu}\right),\;\;\;\;\;g(s, 0)\geq g(0, 0)\geq 0\;\;\;\mbox{ and }\;\;\; g\left(s, \frac{\pi}{\mu}\right)\geq g\left(\frac{\pi-\sigma}{\mu}, \frac{\pi}{\mu}\right)\geq 0.$$
\end{itemize}

Since $C_{\sigma}$ is a compact subset and $g$ is smooth, the Weierstrass Theorem assures the existence of a global minimum for $g$ on $C_{\sigma}$. Since the possible candidates for such minimum were calculated in Steps 1. and 2. we conclude that $g$ is nonnegative on $C_{\sigma}$, ending the proof.

\end{proof}

\begin{figure}[h!]
	\centering
	\includegraphics[scale=.6]{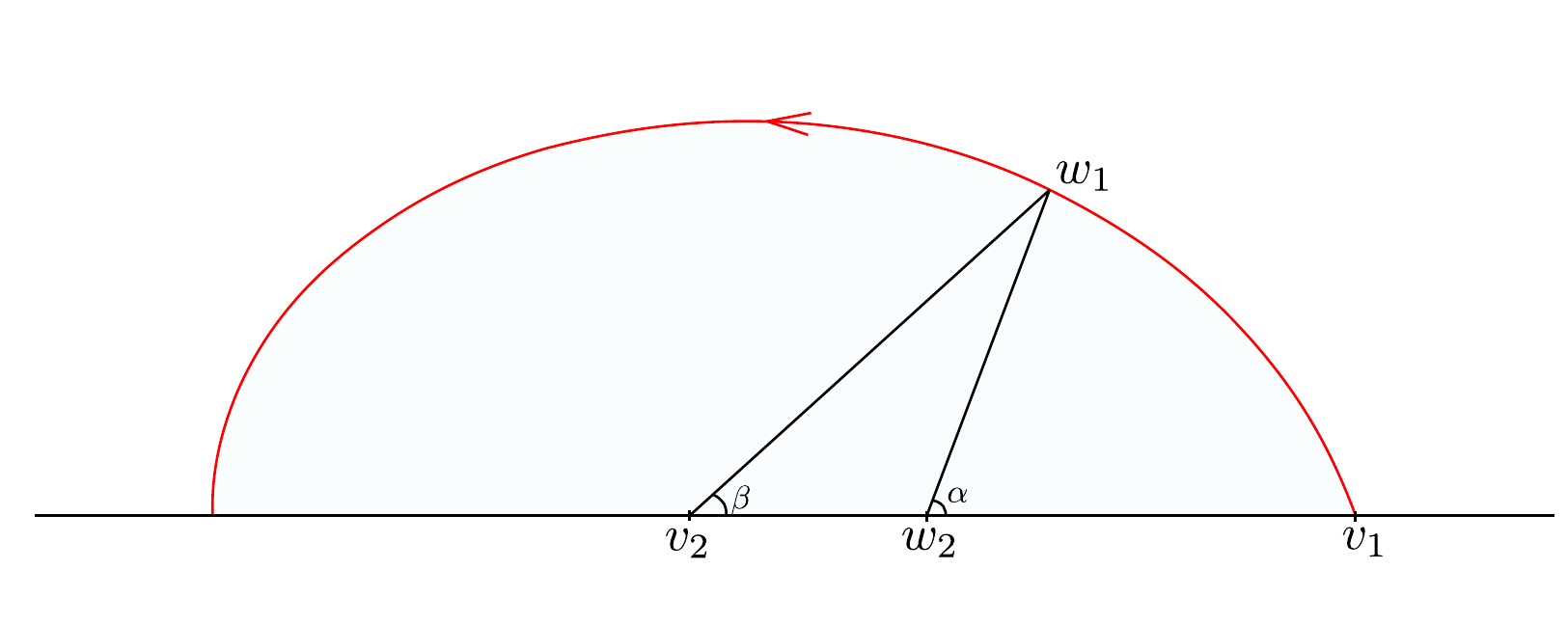}
	\caption{The invariance of $\CC_A(v_1, v_2)$}
	\label{fig2}
\end{figure}

\section{Linear control systems on $\R^2$}

A linear control system (LCS) on $\R^2$ is given by the family of ODEs
	\begin{flalign*}
		&&\dot{v}=Av+u\eta, \;\;\;\;u\in\Omega,&&\hspace{-1cm}\left(\Sigma_{\R^2}\right)
	\end{flalign*}
where $\Omega=[u^-, u^+]$ with $u^-<u^+$ and $\eta\neq 0$. 

The set $\Omega$ is called the {\it control range} of the system $\Sigma_{\R^2}$. The family of the {\it control functions} $\UC$ is, by definition, the set of all piecewise constant functions with image in $\Omega$. The {\it solution} of $\Sigma_{\R^2}$ starting at $v\in\R^2$ and associated control $ {\bf u}\in\UC$ is the unique piecewise differentiable curve $s\in\R\mapsto\varphi(s, v, {\bf u})$ satisfying 
$$\frac{d}{ds}\varphi(s, v, {\bf u})=A\varphi(s, v, {\bf u})+{\bf u}(s)\eta.$$
It is not hard to see that the solutions of $\Sigma_{\R^2}$ are given by concatenations of the curves associated with constant control functions.

For any $v\in\R^2$, the {\it positive} and the {\it negative orbits} of $\Sigma_{\R^2}$ are given, respectively, by the sets
$$\OC^+(v):=\{\varphi(s, v, {\bf u}), \;s\geq 0, {\bf u}\in\UC\}\hspace{1cm}\mbox{ and }\hspace{1cm}\OC^-(v):=\{\varphi(s, v, {\bf u}), \;s\leq 0, {\bf u}\in\UC\}.$$

\begin{definition}
\label{control}
A {\it control set} of $\Sigma_{\R^2}$ is a subset $D$ of $\R^2$ satisfying
\begin{itemize}
    \item[(a)] For any $v\in D$ there exists ${\bf u}\in\UC$ such that $\varphi(\R^+, v, {\bf u})\subset D$;
    \item[(b)] For any $v\in D$ it holds that $D\subset\overline{\OC^+(v)}$;
    \item[(c)] $D$ is maximal w.r.t. set inclusion satisfying (a) and (b). 
\end{itemize}

\end{definition}

If a control set $D$ of $\Sigma_{\R^2}$ satisfies $D=\R^2$ we say the $\Sigma_{\R^2}$ is controllable.

\begin{remark}
 Under the condition that $0\in\inner\Omega$, it is well know in the literatura that LCSs on Euclidean spaces admits a unique control set with nonempty interior. This control set is bounded if and only if the matrix $A$ is hyperbolic and is closed (open) if and only if $A$ has only eigenvalues with nonnegative (nonpositive) real parts (see for instance \cite[Chapter 3]{Fritz}).
\end{remark}
\pagebreak

From here we assume that the matrix $A\in\mathfrak{gl}(\R, 2)$ satisfies $\sigma_A<0$, and fix an orthonormal basis of $\R^2$ such that 
$$A=\left(\begin{array}{cc}
  \lambda  &  -\mu \\
    \mu & \lambda
\end{array}\right).$$

Since $\det A\neq 0$ it holds that
$$\varphi(s, v, u)=\rme^{s A}(v-v(u))+v(u), \;\;\;\mbox{ where } \;\;\;v(u)=-uA^{-1}\eta,$$
are the equilibria of the system. In particular, the solutions of $\Sigma_{\R^2}$ for constant control functions coincide with the spirals $\varphi_A(s, v, v(u))$ if $\tr A\neq 0$ and lie on circumferences if $\tr A=0$.


In what follows we analyze the dynamics of the solutions of $\Sigma_{\R^2}$ in order to obtain a full characterization of the control sets of the system. Moreover, all the results that follows do not need the assumption that $0\in\inner\Omega$.




\subsection{The control set with nonempty interior}

In this section, we construct explicitly the control set of $\Sigma_{\R^2}$ with a non-empty interior by considering the possibilities for the trace of the matrix $A$.

\subsubsection{The case $\tr A=0$}

In this case, the solutions of $\Sigma_{\R^2}$ for constant controls have the form 
$$\varphi(s, v, u)=R_{s\mu}(v-v(u))+v(u),$$
and they lie on the circumferences $C_{u, v}$ with center $v(u)$ and radius $|v-v(u)|$.

\begin{theorem}
If the associated matrix $A$ of $\Sigma_{\R^2}$ is such that $\tr A=0$ and $\det A>0$, then $\Sigma_{\R^2}$ is controllable.
\end{theorem}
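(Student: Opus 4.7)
My plan is to establish controllability directly by showing that for every $v \in \R^2$ the positive orbit $\OC^+(v)$ is dense in $\R^2$. With $\tr A = 0$ and $\det A > 0$, the normal-form parameters satisfy $\lambda = 0$ and $\mu > 0$, so the formula $\varphi(s, v, u) = R_{s\mu}(v - v(u)) + v(u)$ displayed just above the theorem exhibits the time-$s$ map under constant control $u$ as the rotation of $\R^2$ about $v(u) = -u A^{-1}\eta$ by angle $s\mu$. Because each such flow is $(2\pi/\mu)$-periodic, every angle in $[0, 2\pi)$ is realized in positive time, so the problem reduces to showing that the semigroup of planar rigid motions generated by rotations about the points $\{v(u) : u \in \Omega\}$ by arbitrary angles acts with a dense orbit on $\R^2$.

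The main computation will be a classical rigid-motion identity. Setting $c_\pm := v(u^\pm)$---two distinct points on $\R\cdot A^{-1}\eta$, since $u^- < u^+$ and $A^{-1}\eta \neq 0$---I will expand the composition of the rotation by $\alpha$ about $c_-$ with the rotation by $2\pi - \alpha$ about $c_+$ and verify that it equals the translation
$$v \longmapsto v + w_\alpha, \qquad w_\alpha := (I - R_{-\alpha})(c_+ - c_-),$$
for every $\alpha \in [0, 2\pi)$. This composition is realized in positive time by the concatenation of control $u^-$ on $[0, \alpha/\mu]$ followed by control $u^+$ on $[\alpha/\mu, 2\pi/\mu]$. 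As $\alpha$ varies, $w_\alpha$ traces a full circle $\mathcal K$ through the origin of radius $|c_+ - c_-|$, and swapping the roles of $u^-$ and $u^+$ yields the reflected circle $-\mathcal K$ of translations.

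Finally, I will argue that the additive group $G$ generated by $\mathcal K \cup (-\mathcal K)$ is dense in $\R^2$. Since $\mathcal K$ is a smooth curve through $0$ that is not contained in any $1$-dimensional subspace (for instance $w_\pi = 2(c_+ - c_-)$ and $w_{\pi/2}$ are $\R$-linearly independent), the closed subgroup $\overline{G} \leq \R^2$ cannot lie in a proper subspace, and the classification of closed subgroups of $\R^2$ forces $\overline{G} = \R^2$. Concatenating the corresponding positive-time arcs therefore yields a dense set of translations, giving $\overline{\OC^+(v)} = \R^2$ for every $v$, which by the definition of controllability completes the proof. The only thing that might look like an obstacle---the positive-time constraint---is dissolved by the periodicity of the constant-control flows; after that, the argument is essentially the rigid-motion identity for $w_\alpha$ plus the elementary classification of closed subgroups of $\R^2$.
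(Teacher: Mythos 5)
Your proposal is correct, and it takes a genuinely different route from the paper. The paper's proof is a direct geometric construction: starting from an arbitrary $v$, it hops between circles centered at the two extreme equilibria $v(u^{\pm})$, each hop landing on the line $\R\cdot\theta\eta$ with the radius decreased by exactly $|v(u^+)-v(u^-)|$, so after finitely many steps the trajectory enters $v(\Omega)$, where an intermediate control steers it exactly onto a fixed equilibrium $v(u_0)$; choosing the complementary arcs gives the return path, hence a periodic orbit through $v$ and $v(u_0)$ and \emph{exact} controllability. Your argument instead exploits the group structure of planar rigid motions: the composition of a rotation by $\alpha$ about $v(u^-)$ with a rotation by $2\pi-\alpha$ about $v(u^+)$ is the translation by $w_\alpha=(I-R_{-\alpha})(v(u^+)-v(u^-))$ (your identity checks out), these translations sweep a circle $\mathcal K$ of positive radius through the origin, and the subgroup they generate is dense. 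This is cleaner and more conceptual, but it yields only density of each positive orbit, i.e.\ approximate controllability --- which does suffice for the paper's Definition 3.2-based notion of controllability ($D=\R^2$ satisfies (a) trivially and (b) is exactly the density), though it is weaker than the paper's explicit periodic orbits; if you wanted exact controllability you would additionally invoke openness of the orbits (the Kalman condition holds since $A$ has no real eigenvector). One small imprecision: ``cannot lie in a proper subspace'' is not by itself enough to force $\overline{G}=\R^2$ (a lattice is a counterexample); the correct statement, which your appeal to the classification of closed subgroups of $\R^2$ supplies, is that a circle of positive radius meets every line in at most two points and hence cannot be contained in any proper closed subgroup $V\oplus\Lambda$. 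With that phrasing tightened, the proof is complete.
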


\begin{proof}
In order to show the result, it is enough to construct a periodic orbit between an arbitrary point $v\in\R^2$ and some fixed $v(u_0)\in v(\Omega)$, which we do as follows:

	\begin{itemize}
	    \item[(a)]  $v(\Omega)=[v(u^-), v(u^+)]$ is a compact interval on the line $\R\cdot\theta\eta$;
	
		\item[(b)] The circumference $C_{u^+, v}$ intersects the line $\R\cdot\theta\eta$ in two points. Denote by $v_1$ the point in this intersection close to $v(u^-)$. In particular, $v_1=\varphi(s_1, v, u^+)$ for some $s_1>0$;
		
		\item[(c)] If $v_1\notin v(\Omega)$, we repeat the process in the previous item for the circumference $C_{u^-, v_1}$, obtaining a point $v_2$. 
		
		\item[(d)] Repeating the previous process, if $v_n\notin v(\Omega)$, we obtain in the same way, a point $v_{n+1}$ belonging to the intersection of the circumference $C_{\bar{u}, v_n}$, and the line $\R\cdot\theta\eta$, where $\bar{u}=u^+$ if $n$ is even and $\bar{u}=u^-$ if $n$ is odd. By induction, we quickly see that the radius $R_n$ of $C_{\bar{u}, v_n}$ satisfies 
		$$R_n=|v_n-v(\bar{u})|=|v-v(u^-)|-n|v(u^+)-v(u^-)|.$$
		Therefore, there exists $N\in\N$ such that $v_N\in v(\Omega)$.
		
		\item[(e)] Now, since $v_N\in v(\Omega)$ there exists, by continuity, $u_N\in\Omega$ satisfying $|v(u_N)-v(u_0)|=|v_N-v(u_N)|.$ The circumference $C_{u_N ,v_N}$ passes through $v_N$ and by the point $v(u_0)$. Therefore, there exists $s_N>0$ such that $\varphi(s_N, v_N, u_N)=v(u_0)$ and by concatenation we get a trajectory from $v$ to $v(u_0)$ (blue paht in Figure \ref{fig5}).
		
		\item[(f)] By choosing the complementary path (red path in Figure \ref{fig5}) on the circumferences constructed on the previous items, we obtain a trajectory from $v(u_0)$ to $v$, which gives us a periodic orbit as desired (Figure \ref{fig4}).
		
	\end{itemize}

\end{proof}

\begin{figure}
	\centering
	\includegraphics[scale=1]{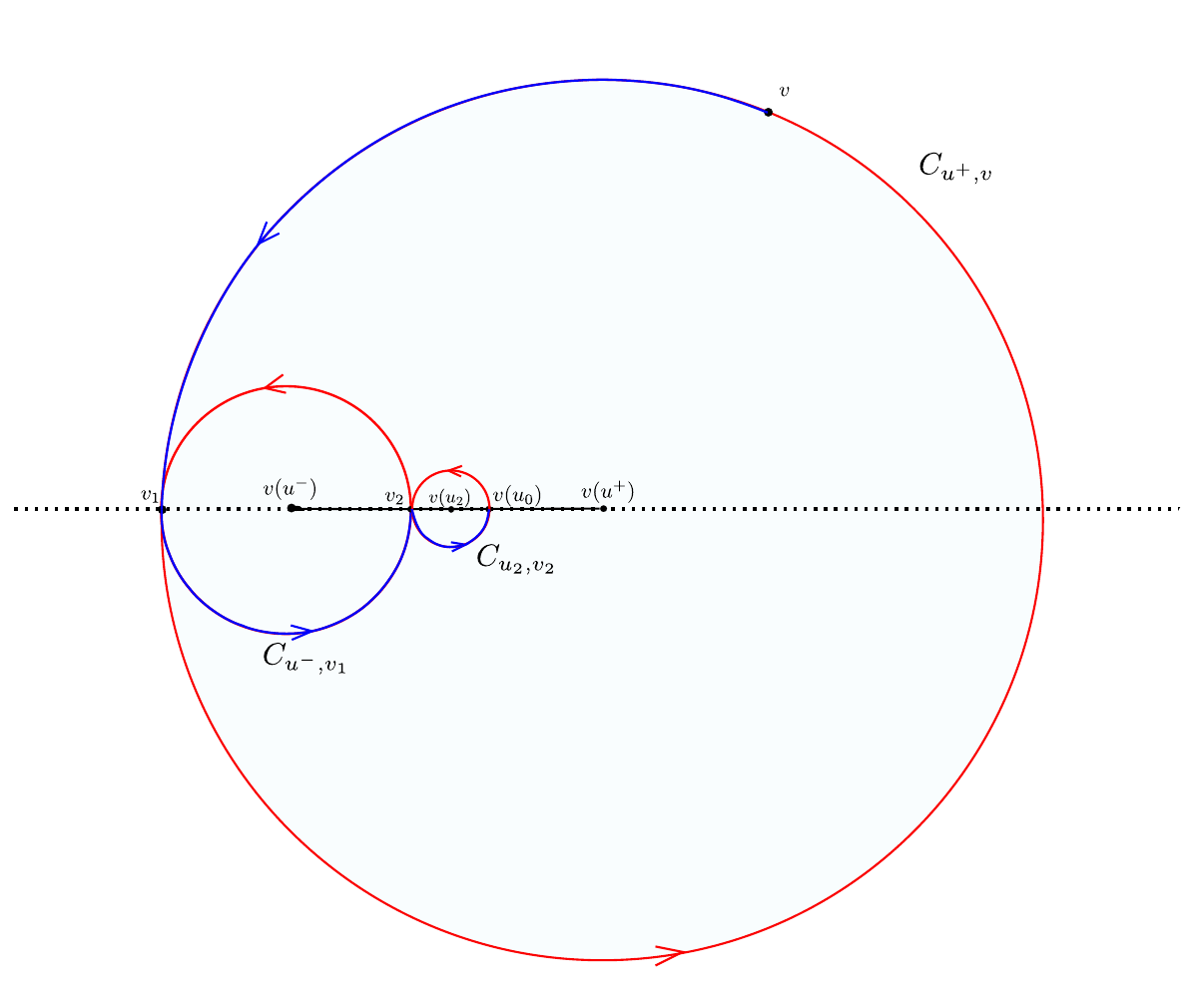}
	\caption{Periodic Orbit through $v(u_0)$ and $v$.}
	\label{fig5}
\end{figure}

\subsubsection{The case $\tr A\neq 0$}

Next, we construct a periodic orbit for $\Sigma_{\R^2}$. The main result in this section will show that such orbit is the boundary of the unique control set of $\Sigma_{\R^2}$. 

As previously w.l.o.g. that the eigenvalues of $A$ are $\lambda\pm\mu i$ with $\lambda<0$ and $\mu>0$. Define recurrently 
$$P_0=v(u^+),\;\;\;P_{2n+1}:=\varphi\left(\frac{\pi}{\mu}, P_{2n}, u^-\right)\;\;\;\mbox{ and }\;\;\;  P_{2n+2}:=\varphi\left(\frac{\pi}{\mu}, P_{2n+1}, u^+\right), \;\;\;n\geq 0.$$ 

A simple inductive process allows us to obtain 

$$P_{2n}=-\rme^{\pi\frac{\lambda}{\mu}}\left[\sum_{j=0}^{2n-1}\rme^{j\pi\frac{\lambda}{\mu}}\right]v(u^-) + \left[\sum_{j=0}^{2n}\rme^{j\pi\frac{\lambda}{\mu}}\right]v(u^+),\hspace{1cm}  n\geq 1$$
and 
$$P_{2n+1}=\left[\sum_{j=0}^{2n-1}\rme^{j\pi\frac{\lambda}{\mu}}\right]v(u^-) -\rme^{\pi\frac{\lambda}{\mu}}\left[\sum_{j=0}^{2n}\rme^{j\pi\frac{\lambda}{\mu}}\right]v(u^+),\hspace{1cm} n\geq 0.$$

On the other hand,  
$$\frac{\lambda}{\mu}<0\;\;\;\implies\;\;\;\rme^{2\pi\frac{\lambda}{\mu}}<1\;\;\;\implies\;\;\;\sum_{j=0}^{m}\rme^{j\pi\frac{\lambda}{\mu}}=\sum_{j=0}^{m}\left(\rme^{\pi\frac{\lambda}{\mu}}\right)^j\rightarrow \frac{1}{1-\rme^{\pi\frac{\lambda}{\mu}}}\;\;\;\;\mbox{ as }\;\;\;m\rightarrow+\infty.$$  

Consequently, 
$$P_{2n}\rightarrow P^+:=\left(\frac{-u^++\rme^{\pi\frac{\lambda}{\mu}}u^-}{1-\rme^{\pi\frac{\lambda}{\mu}}}\right)A^{-1}\eta\;\;\;\;\mbox{ and }\;\;\;\;P_{2n+1}\rightarrow P^-:=\left(\frac{-u^-+\rme^{\pi\frac{\lambda}{\mu}}u^+}{1-\rme^{\pi\frac{\lambda}{\mu}}}\right)A^{-1}\eta.$$

Note that both of the points $P^-, P^+$ belong to the line $\R A^{-1}\eta$ and satisfy
$$P^+-v(u^+)=\frac{(u^+-u^-)\rme^{\pi\frac{\lambda}{\mu}}}{u^+(1-\rme^{\pi\frac{\lambda}{\mu}})}v(u^+)\hspace{.5cm}\mbox{ and }\hspace{.5cm}P^--v(u^-)=-\frac{(u^+-u^-)\rme^{\pi\frac{\lambda}{\mu}}}{u^-(1-\rme^{\pi\frac{\lambda}{\mu}})}v(u^-),$$
implying that 
$$P^-<v(u^-)< v(u^+)< P^+\;\;\;\;\mbox{ on the line }\;\;\;\R\cdot(-A^{-1}\eta).$$

Moreover, it holds that 
$$\varphi\left(\frac{\pi}{\mu}, P^+, u_-\right)=-\rme^{\pi\frac{\lambda}{\mu}}P^++(1+\rme^{\mu\frac{\lambda}{\mu}})v(u^-)=\left[-\rme^{\pi\frac{\lambda}{\mu}}\left(\frac{-u^++\rme^{\pi\frac{\lambda}{\mu}}u^-}{1-\rme^{\pi\frac{\lambda}{\mu}}}\right)-(1+\rme^{\mu\frac{\lambda}{\mu}})u^-\right] A^{-1}\eta$$
$$\left(\frac{\rme^{\pi\frac{\lambda}{\mu}}u^+-\rme^{2\pi\frac{\lambda}{\mu}}u^--(1-\rme^{2\mu\frac{\lambda}{\mu}})u^-}{1-\rme^{\pi\frac{\lambda}{\mu}}} \right)A^{-1}\eta=\left(\frac{-u^-+\rme^{\pi\frac{\lambda}{\mu}}u^+}{1-\rme^{\pi\frac{\lambda}{\mu}}}\right)A^{-1}\eta=P^-,$$
and analogously, 
$$\varphi\left(\frac{\pi}{\mu}, P^-, u^+\right)=P^+,$$
showing the following:

\begin{proposition}
\label{periodic}
The subset of $\R^2$ given by 
$$\OC:=\left\{\varphi(s, P^+, u^-), s\in \left[0, \frac{\pi}{\mu}\right]\right\}\cup \left\{\varphi(s, P^-, u^+), s\in \left[0, \frac{\pi}{\mu}\right]\right\},$$
is a periodic orbit of $\Sigma_{\R^2}$.
\end{proposition}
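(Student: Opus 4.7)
The plan is to verify the two endpoint identities
\[
\varphi\!\left(\tfrac{\pi}{\mu}, P^+, u^-\right) = P^-
\qquad\text{and}\qquad
\varphi\!\left(\tfrac{\pi}{\mu}, P^-, u^+\right) = P^+,
\]
from which the claim that $\OC$ is a periodic orbit follows immediately by concatenation. These identities reduce to purely scalar algebra because, in the distinguished basis, one has $\rme^{(\pi/\mu)A} = -\rme^{\pi\lambda/\mu}\, I$ (the rotation part is $R_\pi = -I$), so that the solution map $\varphi(\pi/\mu, v, u) = -\rme^{\pi\lambda/\mu}(v-v(u)) + v(u)$ is affine in $v$.

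First I would substitute the explicit expression $P^+ = \left(\frac{-u^+ + \rme^{\pi\lambda/\mu} u^-}{1-\rme^{\pi\lambda/\mu}}\right) A^{-1}\eta$ into this affine map, use $v(u^-) = -u^- A^{-1}\eta$, and collect the coefficient of $A^{-1}\eta$. The target coefficient is $\frac{-u^-+\rme^{\pi\lambda/\mu} u^+}{1-\rme^{\pi\lambda/\mu}}$, and this simplification is the line-by-line computation already laid out in the display immediately preceding the statement. The companion identity $\varphi(\pi/\mu, P^-, u^+) = P^+$ then follows by observing that swapping $u^+ \leftrightarrow u^-$ swaps $P^+ \leftrightarrow P^-$ and $v(u^+) \leftrightarrow v(u^-)$, reducing to the same computation.

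Once both endpoint identities hold, the piecewise-constant control ${\bf u}\in\UC$ defined by ${\bf u}(s) = u^-$ on $[0,\pi/\mu)$ and ${\bf u}(s) = u^+$ on $[\pi/\mu, 2\pi/\mu)$, extended $2\pi/\mu$-periodically to $\R$, produces a solution $\varphi(\,\cdot\,, P^+, {\bf u})$ that closes on itself at time $2\pi/\mu$ and whose image is exactly $\OC$. This realizes $\OC$ as a periodic orbit of $\Sigma_{\R^2}$.

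The only potential obstacle is the algebraic verification that the two scalar coefficients match, and this is a short manipulation that has effectively already been performed in the text leading up to the statement; no genuine analytic or geometric difficulty arises beyond this bookkeeping step.
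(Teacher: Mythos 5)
Your proposal is correct and follows essentially the same route as the paper: both verify the two endpoint identities $\varphi(\pi/\mu, P^+, u^-)=P^-$ and $\varphi(\pi/\mu, P^-, u^+)=P^+$ by exploiting $\rme^{(\pi/\mu)A}=-\rme^{\pi\lambda/\mu}I$ and substituting the explicit formulas for $P^\pm$, then conclude by concatenating the two arcs under the alternating control. Your symmetry observation (swapping $u^+\leftrightarrow u^-$ swaps $P^+\leftrightarrow P^-$) is a slightly cleaner way to dispatch the second identity than the paper's ``analogously,'' but the argument is the same.
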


\begin{figure}
	\centering
	\includegraphics[scale=.6]{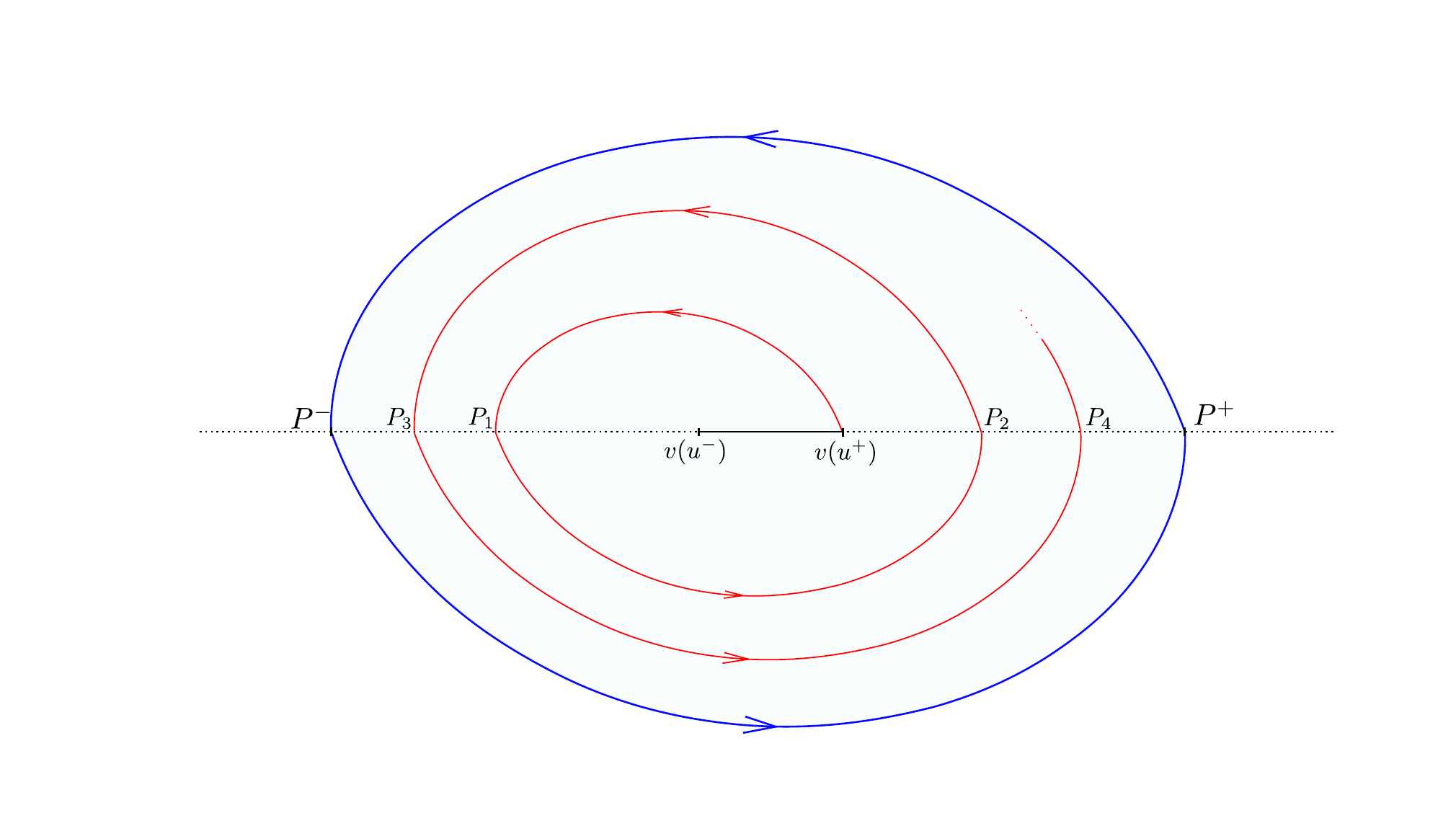}
	\caption{Periodic Orbit}
	\label{fig3}
\end{figure}

Let us denote by $\CC$ the closure of the region delimited by the periodic orbit $\OC$. The next result shows that $\CC$, or its interior, is a control set of the system.

\begin{theorem}
\label{teo}
For the LCS $\Sigma_{\R^2}$ with $\sigma_A<0$ and $\tr A\neq 0$ it holds that 
\begin{enumerate}
    \item $\tr A<0$ and $D=\CC$ is a control set;
    \item $\tr A>0$ and $D=\inner\CC$ is a control set.
\end{enumerate}
\end{theorem}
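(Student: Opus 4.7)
My plan is to verify the three conditions of Definition \ref{control} for $D$, using Proposition \ref{invariance} as the main tool together with the asymptotic iteration $P_n\to P^\pm$ established just before Proposition \ref{periodic}. I will describe the argument for $\tr A<0$ (so $\lambda<0$) in detail and then indicate how the time-reversal observation of Remark 2.3 reduces the case $\tr A>0$ to it. The key geometric setup is that the line $\R\cdot A^{-1}\eta$ cuts $\CC$ into two halves $\CC^-$ and $\CC^+$, and each half, up to translation by $v(u^\mp)$, is exactly one of the regions $\CC_A(P^+,v(u^-))$ or $\CC_A(P^-,v(u^+))$ to which Proposition \ref{invariance} applies directly.

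For positive invariance, Definition \ref{control}(a), I will construct a piecewise-constant control by switching between $u^-$ and $u^+$ each time the trajectory crosses the line $\R\cdot A^{-1}\eta$. Starting with $v\in\CC^-$ and applying $u^-$, Proposition \ref{invariance} with $v_1=P^+$, $v_2=v(u^-)$, $w_1=v$, $w_2=v(u^-)$ keeps the spiral in $\CC^-$ until time $(\pi-\sigma)/\mu$, when it meets the line; switching to $u^+$, the symmetric application in $\CC^+$ keeps it inside; iterating yields a trajectory contained in $\CC$ for all $s\ge 0$. For $\tr A>0$ the same switching logic, combined with time reversal, will give invariance of $\inner\CC$.

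For approximate reachability, Definition \ref{control}(b), my plan is a three-stage construction from $v\in\inner\CC$. First, using constant control $u^+$ the spiral contracts onto $v(u^+)$ since $\lambda<0$, so $v(u^+)\in\overline{\OC^+(v)}$. Second, starting from $v(u^+)=P_0$ and alternating $(u^-,u^+,u^-,\dots)$ with half-periods $\pi/\mu$, the iterates $P_n$ converge to $P^\pm$, so every point of $\OC$ lies in $\overline{\OC^+(v)}$. Third, given any target $w\in\CC$, I will pass close enough to $\OC$ and then, using a short finite switching maneuver again controlled by Proposition \ref{invariance} together with continuous dependence on initial data, drop into any prescribed neighborhood of $w$.

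Maximality, Definition \ref{control}(c), will follow from an analysis of the dynamics outside $\CC$: in case 1, trajectories from points exterior to $\CC$ under constant controls contract toward $v(u^\pm)\in\inner\CC$, so no exterior point can satisfy (a) together with approximate reachability from interior points of $\CC$; in case 2, the boundary $\OC$ cannot be reached in finite time from $\inner\CC$ because the switching trajectory only approaches it asymptotically, forcing strict inclusion and making $\OC$ itself a separate control set as announced in the introduction. The main obstacle I anticipate is the third stage of the reachability step: quantifying that a trajectory passing near $\OC$ can be steered into an arbitrary interior neighborhood of $w$ requires a delicate interplay between the asymptotic convergence $P_n\to P^\pm$ and the forward invariance supplied by Proposition \ref{invariance}, and will be the most technical part of the proof.
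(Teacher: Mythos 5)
Your architecture (verify (a)--(c) of Definition \ref{control} via Proposition \ref{invariance} and the iteration $P_n\to P^\pm$) matches the paper's, but two steps have genuine gaps. First, and most importantly, your invariance argument only produces \emph{one} admissible control (the $u^-/u^+$ switching strategy) that keeps the trajectory inside $\CC$. That suffices for condition (a), but the maximality argument in case 1 requires the much stronger statement that $\CC$ is forward invariant under \emph{every} control: only then is $\OC^+(v)\subset\CC$ for $v\in\inner\CC$, which is what excludes any exterior point $w$ from lying in $\overline{\OC^+(v)}$ and hence from being adjoined to the control set. Your stated reason for maximality in case 1 --- that exterior trajectories contract toward $v(u^\pm)$ --- addresses reachability in the wrong direction: it shows exterior points can reach $\CC$, not that they cannot be reached from $\CC$. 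The paper closes this by applying Proposition \ref{invariance} with $w_2=v(u)$ for an \emph{arbitrary} $u\in\Omega$, using that $v(u)$ always lies in the admissible segment $[v(u^-),v(u^+)]$ of the line $\R\cdot A^{-1}\eta$; your application only uses $w_2=v(u^\mp)$. (In case 2 the analogous universal \emph{backward} invariance of $\inner\CC$ is what gives maximality; note also that ``cannot be reached in finite time'' is not the operative obstruction there, since condition (b) only involves closures of positive orbits.)

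Second, the third stage of your reachability step --- steering from a neighborhood of $\OC$ into an arbitrary neighborhood of a target $w\in\inner\CC$ --- is exactly the part you leave open, and the missing idea is a backward-time argument rather than a forward ``drop-in'' maneuver: since $\lambda<0$, for any constant control the backward trajectory from $w$ is unbounded, hence must exit the compact set $\CC$ and therefore crosses $\partial\CC=\OC$ at a point $p$ at some finite negative time; thus $w$ is \emph{exactly} reached from $p\in\OC$ in positive time, and $p$ is (approximately) reached from $v(u^+)$ via your stage 2. Combined with continuous dependence on initial conditions this yields $w\in\overline{\OC^+(v)}$, and in fact the paper obtains the exact equality $\inner\CC=\OC^+(v)$ this way. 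With these two repairs your proof goes through along essentially the paper's lines.
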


\begin{proof}  Let us start by showing, in the next steps, that  
$$\forall v\in\inner \CC, \;\;\;\inner \CC=\OC^+(v)\;\;\mbox{ if }\;\;\lambda<0\;\;\;\mbox{ and }\;\;\;\inner \CC=\OC^-(v)\;\;\mbox{ if }\;\;\lambda>0.$$ 
Since both cases are analogous, we will assume w.l.o.g. that $\lambda<0$ and $\mu>0$.

\begin{itemize}
    \item[\bf Step 1:] $\CC$ is positively invariant; 

For any $u\in\Omega$, it turns out 
$$\varphi(s, v, u)=\varphi_A(s, v, v(u)).$$
As a consequence, the region $\CC$ can be decomposed in two regions $$\CC_A(P^+, v(u^-))\;\;\;\mbox{ and }\;\;\;\CC_A(P^-, v(u^+)),$$
which are delimited, by the line passing through $v(u^+)$ and $v(u^-)$ and the curves 
$$\left\{\varphi(s, P^+, u^-), s\in \left[0, \frac{\pi}{\mu}\right]\right\}\;\;\;\;\mbox{ and }\;\;\;\left\{\varphi(s, P^-, u^+), s\in \left[0, \frac{\pi}{\mu}\right]\right\},$$ 
respectively. 

Moreover, on the line $\R\cdot(-A^{-1}\eta)$, for any $u\in\Omega$ and $w\in \CC_A(P^+, v(u^-))$, it holds that  $v(u)\in[v(u^-), P^+]$. Therefore, by Proposition \ref{invariance} it holds that 
$$\varphi(s, w, u)=\varphi_A(s, w, v(u))\in \CC_A(P^+, v(u^-)),$$
for any $s\in \left[0, \frac{\pi-\sigma}{\mu}\right]$. Here, $\sigma$ is the angle between $v(u^+)-v(u^-)$ and $w-v(u)$. In particular, 
$$P_1:=\varphi\left(\frac{\pi-\sigma}{\mu}, w, u\right)\in [P^-, P^+]\subset\CC_A(P^-, v(u^+)).$$
Since $P_1\in \CC_A(P^-, v(u^+))$ and $v(u)\in [v(u^+), P^-]$, Proposition \ref{invariance}
implies that 
$$\varphi(s, P_1, u)=\varphi_A(s, P_1, v(u))\in \CC_A(P^-, v(u^+)), $$
for any $s\in \left[0, \frac{\pi}{\mu}\right]$. Again, 
$$P_2:=\varphi\left(\frac{\pi}{\mu}, P_1, u\right)\in [P^-, P^+]\subset\CC_A(P^+, v(u^-)).$$
Since we can repeat the process, we already prove the invariance of $\CC$ in positive time. 

\item[\bf Step 2:] Controllability holds on $\inner \CC$;

 The result certainly follows if we show the relationships
 $$\forall v\in\inner \CC, \;\;\;v\in\OC^+(v(u^-))\;\;\;\mbox{ and }\;\;\;v(u^-)\in\OC^+(v).$$
The assumption $\lambda<0$ implies that 
$$\forall u\in\Omega, \;\;\;|\varphi(s, v, u)|\rightarrow+\infty\;\;\;\mbox{ as }\;\;\;s\rightarrow-\infty.$$
Consequently, the compactness of $\CC$ shows the existence of $s_0>0$ such that $\varphi(-s_0, v, u)\in\partial \CC=\OC$. Moreover, there exists $n\in\N$ and $t_0>0$ such that
$$\varphi(-s_0, v, u)=\varphi(t_0, P_{2n}, u^-) \;\;\;\;\; \mbox{ or  } \;\;\;\;\; \varphi(-s_0, v, u)=\varphi(t_0, P_{2n+1}, u^+).$$
By construction the points $P_{m}, m\in\N$ are attained from $v(u^-)$ in positive time. Therefore, the previous arguments show that $v$ is attained from $v(u^-)$, or equivalently $v\in\OC^+(v(u^-))$.
Furthermore, $s\mapsto \varphi(s, v, u^-)$ is a curve that revolves around $v(u^-)$ and $s\mapsto \varphi(s, v(u^-), u)$ revolves around $v(u)$. Then, for any $u\neq u^-$, there exist $s_0, t_0>0$ such that
$$\varphi(s_0, v, u^-)=\varphi(-t_0, v(u^-), u)\;\;\;\implies\;\;\; v(u^-)\in\OC^+(v),$$
proving the claim.

    \item[\bf Step 3:] It holds that 
    $$\forall v\in\inner \CC, \;\;\; \inner \CC=\OC^+(v).$$
    
    Since, for any $s\in\R$ and $u\in\Omega$, the map 
    $$v\in\R^2\mapsto\varphi(s, v, u)\in\R^2,$$
    is a diffeomorphism, Step 1 implies that 
    $$\varphi(s, \inner \CC, u)\subset\inner \CC, \;\;\;\forall s>0, u\in\Omega.$$
    As a consequence, 
    $$\forall v\in\inner \CC, \;\;\; \OC^+(v)\subset\inner \CC.$$
    On the other hand, by Step 2, controllability holds inside $\inner \CC$. Consequently, for any $v, w\in\inner \CC$ we obtain 
    $$w\in\OC^+(v)\;\;\;\implies\;\;\;\inner \CC\subset\OC^+(v),$$
showing the desired.
\end{itemize}

By the previous, it is straightforward to see that $\inner \CC$ satisfies conditions (a) and (b) of Definition \ref{control}. Therefore, there exists a control set $D$ such that $\inner \CC\subset D$ and we have that:
\begin{itemize}
    \item[1.] If $\lambda<0$, the positively invariance on item (a) implies that $\OC^+(v)\subset \CC$ for all $v\in \CC$. Since $v\in D$, condition (b) in Definition \ref{control} implies that $D\subset\overline{\OC^+(v)}$ and hence
    $$\overline{\OC^+(v)}\subset \overline{\CC}=\CC\subset D\subset \overline{\OC^+(v)},$$
    showing that $D=\CC$ is in fact the control set of $\Sigma_{\R^2}$.
    
    \item[2.] If $\lambda>0$ let $v\in\R^2$ and assume that $$\overline{\OC^+(v)}\cap\inner \CC\neq \emptyset.$$
    In particular, there exists $s>0$, ${\bf u}\in\UC$ such that $$\varphi(s, v, {\bf u})\in\inner \CC\;\;\implies\;\;v\in\varphi(-s, \inner \CC, {\bf u}')\subset\inner \CC,$$
    implying the maximality of $\inner \CC$ and hence $D=\inner \CC$, concluding the proof.
\end{itemize}
\end{proof}

\begin{remark}
The previous result implies that, if $\tr A\neq 0$, the LCS admits a bounded control set with nonempty interior which is closed if $\tr A<0$ and open when $\tr A>0$. Moreover, from Step 1 in the proof of Theorem \ref{teo}, it holds that
\begin{equation}
\label{eq}
    \forall v\in\CC, {\bf u}\in\UC \;\;\;\;\varphi(s, v, {\bf u})\in\CC\;\;\;\mbox{ if }\;\;\; s\cdot\tr A<0
\end{equation}
 
\end{remark}

\subsection{The possible control sets of a LCS}

As is well stated in the literature, if $0\in\inner\Omega$, the control set $D$ previously obtained is the only control set of $\Sigma_{\R^2}$ with non-empty interior. This section shows that $D$ is in fact the only control set with non-empty interior, even without the condition $0\in\inner\Omega$. Moreover, if the trace of the associated matrix $A$ is positive, the periodic orbit $\OC=\partial D$ is also a control set of $\Sigma_{\R^2}$.

In order to show the previous claim, the following statement will be crucial.

\begin{lemma}
For any $v\in\R^2\setminus \CC$, $u\in\Omega$ it holds that:
\begin{itemize}
    \item[(a)] $|\varphi(s, v, u)-\CC|\leq\rme^{s\lambda}|v-\CC|$\;\;\; if\;\;\; $s\lambda<0$;
    \item[(b)] $|\varphi(s, v, u)-\CC|\geq\rme^{s\lambda}|v-\CC|$ \;\;\;if\;\;\; $s\lambda>0$,
    where $2\lambda=\tr A$.
\end{itemize}

\end{lemma}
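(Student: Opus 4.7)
The plan rests on a single algebraic identity: for any constant control $u\in\Omega$ and any two points $v,w\in\R^2$,
$$\varphi(s,v,u)-\varphi(s,w,u)=\rme^{sA}(v-w),$$
which follows at once from $\varphi(s,\cdot,u)=\rme^{sA}(\cdot-v(u))+v(u)$. In the adapted basis where $\rme^{sA}=\rme^{s\lambda}R_{s\mu}$, the right-hand side has Euclidean norm $\rme^{s\lambda}|v-w|$ because $R_{s\mu}$ is an isometry. So the constant-control flow scales the Euclidean distance between any two points by exactly $\rme^{s\lambda}$.

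For part (a), I would choose $w\in\CC$ attaining the infimum $|v-\CC|=|v-w|$, which exists because $\CC$ is compact. When $s\lambda<0$ one also has $s\cdot\tr A<0$, so the invariance recorded in \eqref{eq} yields $\varphi(s,w,u)\in\CC$. Since $\CC$ contains this image point,
$$|\varphi(s,v,u)-\CC|\leq|\varphi(s,v,u)-\varphi(s,w,u)|=\rme^{s\lambda}|v-w|=\rme^{s\lambda}|v-\CC|,$$
which is the desired inequality.

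For part (b), I would run the same argument backwards. Fix $s$ with $s\lambda>0$, set $v^{*}:=\varphi(s,v,u)$, and pick $w^{*}\in\CC$ attaining $|v^{*}-\CC|=|v^{*}-w^{*}|$. Since $(-s)\lambda<0$, applying \eqref{eq} gives $\varphi(-s,w^{*},u)\in\CC$, and because $\varphi(-s,\cdot,u)$ inverts $\varphi(s,\cdot,u)$ for constant controls, $\varphi(-s,v^{*},u)=v$. Hence
$$|v-\CC|\leq|v-\varphi(-s,w^{*},u)|=\rme^{-s\lambda}|v^{*}-w^{*}|=\rme^{-s\lambda}|\varphi(s,v,u)-\CC|,$$
which rearranges to $|\varphi(s,v,u)-\CC|\geq\rme^{s\lambda}|v-\CC|$.

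The only nontrivial input is the invariance of $\CC$ under constant-control flow established in Step~1 of the proof of Theorem~\ref{teo} and summarised in \eqref{eq}; everything else reduces to the rotation-scaling decomposition $\rme^{sA}=\rme^{s\lambda}R_{s\mu}$. No further case analysis on the sign of $\tr A$ is needed, and the hypothesis $v\notin\CC$ is used only to ensure the inequalities are informative, the identities themselves being valid for every $v\in\R^2$.
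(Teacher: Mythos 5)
Your proof is correct and rests on the same two ingredients as the paper's: compactness of $\CC$ to pick a nearest point, the invariance property \eqref{eq} to keep that point's image inside $\CC$, and the exact scaling $|\rme^{sA}x|=\rme^{s\lambda}|x|$ in the adapted orthonormal basis. Part (a) is identical to the paper's argument. For part (b) the paper argues by contradiction, first ruling out $\varphi(s,v,u)\in\CC$ via \eqref{eq} and then applying item (a) to $\varphi(s,v,u)$ in reversed time; your direct version applies the nearest-point estimate to $\varphi(s,v,u)$ at time $-s$ straight away, which sidesteps that preliminary case and is marginally cleaner, but it is the same computation rearranged rather than a genuinely different route.
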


\begin{proof}
(a) Since $\CC$ is compact, for any $v\in\R^2$ there exists $v_0\in\CC$ such that $|v-\CC|=|v-v_0|$. By equation (\ref{eq}), it holds that
$$\lambda s<0\;\;\implies\;\;\varphi(s, v_0, u)\in\CC.$$
Consequently,
$$|\varphi(s, v, u)-\CC|\leq |\varphi(s, v, u)-\varphi(s, v_0, u)|=\rme^{s\lambda}|v-v_0|=\rme^{s\lambda}|v-\CC|,$$
showing the assertion.

(b) Let us assume the existence of $v_0\in\R^2\setminus \CC$, $u_0\in\Omega$ and $s_0\in \R$ such that 
$$\lambda s_0>0\;\;\;\mbox{ and }\;\;\;|\varphi(s_0, v_0, u_0)-\CC|< \rme^{s_0\lambda}|v_0-\CC|.$$
Since, 
$$w_0=\varphi(s_0, v_0, u_0)\;\;\;\iff\;\;\;v_0=\varphi(-s_0, w_0, u_0),$$
we have that 
$$w_0\in\CC \;\;\;\mbox{ and }\;\;\;-\lambda s_0<0\;\;\;\stackrel{(\ref{eq}}{\implies}\;\;\; v_0=\varphi(-s_0, w_0, u_0)\in \CC,$$
which cannot happens. Therefore, $w_0\in\R^2\setminus \CC$ and by item (a) we obtain
$$|\varphi(s_0, v_0, u_0)-\CC|< \rme^{s_0\lambda}|v_0-\CC|=\rme^{s_0\lambda}|\varphi(-s_0, w_0, u_0)-\CC|\stackrel{(a)}{\leq}\rme^{s_0\lambda}\rme^{-s_0\lambda}|w_0-\CC|=|\varphi(s_0, v_0, u_0)-\CC|,$$
which is absurd. Therefore, item (b) holds.

\end{proof}

We can now prove the main result concerning the control sets of a LCS on $\R^2$.

\begin{theorem}
Let $\Sigma_{\R^2}$ be a LCS satisfying $\sigma_A<0$ and $\tr A\neq 0$. It holds:
\begin{itemize}
    \item[1.] If $\tr A<0$ the only control set of $\Sigma_{\R^2}$ is $D$;
    \item[2.] If $\tr A>0$ then $\inner D$ and $\partial D$ are the only control sets of $\Sigma_{\R^2}$.
\end{itemize}
\end{theorem}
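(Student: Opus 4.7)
Two standard preliminaries underlie the argument. First, any two control sets of $\Sigma_{\R^2}$ that share a point must coincide: for $v\in D_1\cap D_2$ the union $D_1\cup D_2$ inherits property~(a) of Definition~\ref{control} trivially, and inherits~(b) by concatenating at $v$ a trajectory from $w\in D_1$ approximating $v$ (by (b) of $D_1$) with one from $v$ approximating a target $z\in D_2$ (by (b) of $D_2$), and then extracting a diagonal limit from continuity of the flow in the initial condition. The maximality clause~(c) then forces $D_1=D_1\cup D_2=D_2$. Thus distinct control sets are disjoint.

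For $\tr A<0$, let $D'$ be any control set and pick $v\in D'$ together with $\mathbf{u}_*\in\UC$ satisfying $\varphi(\R^+,v,\mathbf{u}_*)\subset D'$ (from (a)). The plan is to establish $D'\cap\CC\neq\emptyset$; disjointness then collapses $D'$ onto $D=\CC$. Suppose instead $D'\cap\CC=\emptyset$: the trajectory stays in $\R^2\setminus\CC$, and iterating part~(a) of the lemma piecewise along $\mathbf{u}_*$ gives $|\varphi(s,v,\mathbf{u}_*)-\CC|\leq\rme^{s\lambda}|v-\CC|\to 0$. Writing $w_n:=\varphi(s_n,v,\mathbf{u}_*)\in D'$, property~(b) of $D'$ at $w_n$ produces sequences $\varphi(t_k^{(n)},w_n,\mathbf{u}_k^{(n)})\to v$; each such trajectory either stays outside $\CC$ (so the iterated lemma bound keeps its distance to $\CC$ at most $|w_n-\CC|$) or enters $\CC$, in which case the positive invariance of $\CC$ from Step~1 of the proof of Theorem~\ref{teo} traps it there at distance zero. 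Passing to the limit in $k$ and then in $n$ yields $|v-\CC|\leq|w_n-\CC|\to 0$, contradicting $v\notin\CC$.

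For $\tr A>0$, the analogous expansion estimate — part~(b) of the lemma iterated using that the negative invariance of $\CC$ keeps positive trajectories from $\R^2\setminus\CC$ outside $\CC$ — yields the containment that every control set $D'$ satisfies $D'\subset\CC$. Indeed, if $w\in D'\setminus\CC$, every forward orbit from $w$ is pinned at distance $\geq|w-\CC|>0$ from $\CC$, so property~(b) of $D'$ at $w$ forces every element of $D'$ to satisfy the same lower bound; this excludes $D'\cap\CC\neq\emptyset$ directly, and then applying the same principle to the far-out point $w=\varphi(s,v,\mathbf{u}_*)\in D'$ (with $s$ large along a $D'$-trajectory from a fixed $v\in D'$, so that $|w-\CC|>|v-\CC|$) contradicts $v\in D'\subset\overline{\OC^+(w)}$. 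With this containment, $\OC$ is verified to be a control set: conditions (a) and (b) come from Proposition~\ref{periodic} by traversing the periodic orbit, and for the maximality clause~(c) any $D''\supsetneq\OC$ satisfying (a),(b) extends via Zorn's lemma (unions of chains preserve both conditions) to a control set $D'''\supset D''\supsetneq\OC$; the containment forces $D'''\subset\CC$, and disjointness with $D=\inner\CC$ then leaves only $D'''=D$ (impossible since $\OC\subset D'''$ while $\OC\cap\inner\CC=\emptyset$) or $D'''\subset\CC\setminus\inner\CC=\OC$ (impossible since $D'''\supsetneq\OC$). Uniqueness finally follows from disjointness: every control set lies in $\CC$ and must equal $D$ or $\OC$. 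The principal obstacle throughout is the maximality clause for $\OC$ in case~2, which the Zorn-extension combined with the containment and disjointness principle resolves cleanly.
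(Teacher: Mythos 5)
Your proof is correct and follows essentially the same route as the paper's: both hinge on iterating the contraction/expansion estimates of the preceding lemma along a forward trajectory supplied by condition (a) of Definition \ref{control}, and then using condition (b) to force $|v-\CC|\le \rme^{s\lambda}|v-\CC|$ (resp.\ the reverse inequality) for all $s>0$, which is impossible off $\CC$. Your extra bookkeeping --- disjointness of distinct control sets, the Zorn extension, and the explicit maximality check for $\OC$ --- merely spells out steps the paper leaves implicit.
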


\begin{proof} Since $\partial D=\OC$ is a periodic orbit, it satisfies conditions (a) and (b) of Definition \ref{control} and is therefore contained in a control set of $\Sigma_{\R^2}$. By Theorem \ref{teo}, we know that $D=\CC$ is a control set if $\tr A<0$ and $D=\inner \CC$ is a control set if $\tr A>0$. 
Therefore, the result follows if we show that no control set of  $\Sigma_{\R^2}$ intersects $\R^2\setminus\CC$.

Since the solutions of $\Sigma_{\R^2}$ are given by concatenations of the solutions for constant controls, it is not hard to show by induction that for all ${\bf u}\in\UC$ and $v\in\R^2\setminus\CC$,
$$|\varphi(s, v, {\bf u})-\CC|\leq \rme^{s\lambda}|v-\CC|\;\;\;\mbox{ if }\;\;\;\lambda s<0,$$
and
$$|\varphi(s, v, {\bf u})-\CC|\geq \rme^{s\lambda}|v-\CC|\;\;\;\mbox{ if }\;\;\;\lambda s>0.$$

In particular, if $|v-\CC|=\epsilon>0$ we have that 
$$\OC^+(v)\subset N_{\epsilon}\left(\CC\right)\;\;\;\mbox{ if }\;\;\;\lambda<0\;\;\;\mbox{ and }\;\;\;\OC^+(v)\subset \R^2\setminus N_{\epsilon}\left(\CC\right)\;\;\;\mbox{ if }\;\;\;\lambda>0$$
Let us assume that $\Sigma_{\R^2}$ admits a second control set $D'$ satisfying $|v-\CC|=\epsilon>0$ for some $v\in D'$.

By condition (a) in Definition \ref{control}, there exists ${\bf u}\in\UC$ such that $\varphi(s, v, {\bf u})\in D'$ for all $s>0$. If $v\notin \CC$, we have by invariance (see equation (\ref{eq})), that $\varphi(s, v, {\bf u})\notin \CC$ for all $s>0$. Moreover, by condition (b) in Definition \ref{control} and the previous calculations, it holds that $D'\subset\overline{\OC^+(\varphi(s, v, {\bf u}))}$ and by the previous
$$D'\subset N_{\rme^{s\lambda}\epsilon}(\CC)\;\;\;\mbox{ if }\;\;\;\lambda<0\;\;\;\mbox{ and }\;\;\;D'\subset \R^2\setminus N_{\epsilon}\left(\CC\right)\;\;\;\mbox{ if }\;\;\;\lambda>0.$$
Consequently, for all $s>0$
$$|v-\CC|\leq \rme^{s\lambda}|v-\CC|\;\;\mbox{ if }\;\;\lambda<0\;\;\mbox{ and }\;\;\;|v-\CC|\geq \rme^{s\lambda}|v-\CC|\;\;\mbox{ if }\;\;\lambda>0,$$
which is not possible if $|v-\CC|\neq 0$. 

Therefore, any control set $D'$ of $\Sigma_{\R^2}$ satisfies $D'\subset \CC$ concluding the proof.

\end{proof}

\section{Remarks on continuity and asymptotic behavior of control sets}

The construction of the periodic orbit $\OC$ allows us to analyze the asymptotic behavior of the control set $D$ as the control range grows.

Fix a real number $\nu\in\R$ and define $\Omega_{\alpha, \rho}=[\alpha, \rho]$ where $\alpha<\nu<\rho$. Define the LCS
\begin{flalign*}
		&&\dot{v}=Av+u\eta, \;\;\;\;u\in\Omega_{\alpha, \rho},&&\hspace{-1cm}\left(\Sigma^{\alpha, \rho}_{\R^2}\right)
	\end{flalign*}
where $\eta\neq 0$ and the matrix $A$ satisfies $\sigma_A<0$ and $\tr A<0$. By Theorem \ref{teo} the LCS $\Sigma_{\R^2}^{\alpha, \rho}$ admits a unique control set with nonempty interior $D^{\alpha, \rho}$ whose boundary is the periodic orbit

$$\OC^{\alpha, \rho}:=\left\{\varphi(s, P^+_{\alpha, \rho}, \alpha), s\in \left[0, \frac{\pi}{\mu}\right]\right\}\cup \left\{\varphi(s, P^-_{\alpha, \rho}, \rho), s\in \left[0, \frac{\pi}{\mu}\right]\right\},$$
with 
$$P^+_{\alpha, \rho}:=\left(\frac{-\rho+\rme^{\pi\frac{\lambda}{\mu}}\alpha}{1-\rme^{\pi\frac{\lambda}{\mu}}}\right)A^{-1}\eta\;\;\;\;\mbox{ and }\;\;\;\; P^-_{\alpha, \rho}:=\left(\frac{-\alpha+\rme^{\pi\frac{\lambda}{\mu}}\rho}{1-\rme^{\pi\frac{\lambda}{\mu}}}\right)A^{-1}\eta.$$

The maps 
$$(\alpha, \rho)\mapsto P^+_{\alpha, \rho}\;\;\;\mbox{ and }\;\;\; (\alpha, \rho)\mapsto P^-_{\alpha, \rho},$$
are continuous and it holds that 
$$\alpha\rightarrow-\infty\;\;\;\mbox{ or }\;\;\;\rho\rightarrow+\infty\;\;\;\implies\;\;\;P^+_{\alpha, \rho}\rightarrow+\infty\;\;\mbox{ and }\;\;\;P^-_{\alpha, \rho}\rightarrow-\infty,$$
on the line $\R\cdot(-A^{-1}\eta)$. Therefore, we obtain:

\begin{proposition}
Any LCS on $\R^2$ whose control range $\Omega$ is unbounded is controllable, if the associated matrix $A$ satisfies $\sigma_A<0$.
\end{proposition}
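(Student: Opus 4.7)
The plan is to realize the unbounded-range system as a limit of restricted systems with compact ranges $[\alpha, \rho] \subset \Omega$, each of whose control sets is given by Theorem \ref{teo}, and to show that these restricted control sets exhaust $\R^2$ as the range widens.

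First, I would dispose of the degenerate case $\tr A = 0$: since $\sigma_A < 0$ together with $\tr A = 0$ forces $\det A > 0$, the first theorem of Section 3.1.1 already yields controllability independently of $\Omega$. Assuming henceforth $\tr A \neq 0$, the remark identifying the spirals for $\lambda$ and $-\lambda$ via time reversal lets me reduce to the case $\tr A < 0$.

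Given arbitrary $v, w \in \R^2$, the strategy is to choose $\alpha < \rho$ in $\Omega$ (pushing $\alpha \to -\infty$ or $\rho \to +\infty$ as needed, exploiting the unboundedness of $\Omega$) such that both $v$ and $w$ lie in $\inner \CC^{\alpha, \rho}$. Theorem \ref{teo} then supplies a trajectory of the restricted system $\Sigma^{\alpha, \rho}_{\R^2}$ steering $v$ to $w$ using a control with values in $[\alpha, \rho] \subset \Omega$, which is automatically admissible for the full system $\Sigma_{\R^2}$; this gives $w \in \OC^+(v)$. Since $v, w$ are arbitrary in $\R^2$, the whole plane is the unique control set and $\Sigma_{\R^2}$ is controllable.

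The main obstacle is the selection step, namely showing that $\CC^{\alpha, \rho}$ exhausts $\R^2$ as the range widens. The preamble to the proposition already provides $P^\pm_{\alpha, \rho} \to \pm\infty$ along $\R \cdot(-A^{-1}\eta)$; a direct substitution into the formulas defining $P^\pm_{\alpha,\rho}$ gives
\[
P^+_{\alpha, \rho} - v(\alpha) = \frac{\alpha - \rho}{1 - \rme^{\pi\lambda/\mu}}A^{-1}\eta, \qquad P^-_{\alpha, \rho} - v(\rho) = \frac{\rho - \alpha}{1 - \rme^{\pi\lambda/\mu}}A^{-1}\eta,
\]
so both half-spirals forming $\OC^{\alpha, \rho}$ have radii of order $\rho - \alpha$. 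Evaluating each half-spiral at $s = \pi/(2\mu)$ produces a point at perpendicular distance of order $\rme^{\pi\lambda/(2\mu)}(\rho - \alpha)$ from the line $\R\cdot A^{-1}\eta$, on opposite sides. Combined with the divergence of $P^\pm_{\alpha,\rho}$ along the line, this yields that $\CC^{\alpha, \rho}$ contains any prescribed bounded subset of $\R^2$ once $\rho - \alpha$ is large enough, closing the argument.
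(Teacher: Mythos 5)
Your argument is correct and is essentially the paper's own: the paper likewise deduces the proposition from the exhaustion of $\R^2$ by the control sets $D^{\alpha,\rho}$ of the restricted-range systems, citing only the divergence of $P^{\pm}_{\alpha,\rho}$ along the line $\R\cdot A^{-1}\eta$ and concluding directly. You go somewhat further than the paper by also estimating the growth of the spiral arcs in the direction transverse to that line and by explicitly handling the cases $\tr A=0$ and $\tr A>0$, which tightens a step the paper leaves implicit.
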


\begin{remark}
To obtain controllability, the previous result only requires that $\Omega$ is unbounded and not necessarily the whole real line (see \cite{Sontag}).
\end{remark}

Also, by using the fact that 
$$(s, \alpha, \rho)\mapsto\varphi(s, P^+_{\alpha, \rho}, \alpha)\;\;\;\mbox{ and }\;\;\;(s, \alpha, \rho)\mapsto\varphi(s, P^-_{\alpha, \rho}, \rho),$$
are continuous maps, one can easily shows that the map 
$$(\alpha, \rho)\in(-\infty, \nu)\times(\nu, +\infty)\mapsto \overline{D_{\alpha, \rho}},$$
is continuous in the Hausdorff measure.

\end{document}